\newtheorem{thm}{Theorem}
\newtheorem{prop}[thm]{Proposition}
\newtheorem{cor}[thm]{Corollary}
\newtheorem{lem}[thm]{Lemma}
\newtheorem{rmk}[thm]{Remark}
\def\phi{\varphi}
\def\\{\cr}
\def\({\left(}
\def\){\right)}
\def\[{\left[}
\def\]{\right]}
\def\<{\langle}
\def\>{\rangle}
\def\cA{\mathcal A}
\def\cC{\mathcal C}
\def\S{\mathcal{S}}
\def\Z{\mathbb{Z}}
\begin{document}

\title[Squarefree smooth numbers and Euclidean prime generators]{Squarefree smooth numbers\\ and Euclidean prime generators}
\author{Andrew R. Booker}
\author{Carl Pomerance}
\address{School of Mathematics\\
Bristol University\\
University Walk\\
Bristol, BS8 1TW, UK}
\email{andrew.booker@bristol.ac.uk}
\address{Mathematics Department\\
Dartmouth College\\
Hanover, NH 03755, USA}
\email{carl.pomerance@dartmouth.edu}

\subjclass[2010]{Primary 11A41, Secondary 11A15, 11B25, 11L40}

\date{\today}

\pagenumbering{arabic}

\date{\today}

\begin{abstract}
We show that for each prime $p>7$, every residue mod $p$ can be represented by a squarefree
number with largest prime factor at most $p$.  We give two applications to recursive prime
generators akin to the one Euclid used to prove the infinitude of primes.
\end{abstract}

\maketitle

\section{Introduction}
\label{sec:1}
In \cite{mullin}, Mullin considered the sequence $\{p_k\}_{k=1}^\infty$
defined so that, for every $k\ge0$, $p_{k+1}$ is the smallest prime
factor of $1+p_1\cdots p_k$. From the argument employed by Euclid to
prove the infinitude of prime numbers, it follows that the $p_k$ are
pairwise distinct, and Mullin's sequence can thus be viewed as an explicit,
constructive form of the proof. A natural question, which Mullin posed,
is whether \emph{every} prime eventually occurs in the sequence.
Despite clear heuristic and empirical evidence that the answer must be yes,
it appears to be very difficult to prove anything substantial to that
end.\footnote{At least one of the authors thinks that Mullin's
question is likely undecidable.}

With this setting in mind, in \cite[Section 1.1.3]{cp} and \cite{b},
we (independently) described two variations of Euclid's argument
that allow greater flexibility and lead to sequences that provably
contain every prime number. We recall these constructions in detail in
Section~\ref{sec:nothing}. The main focus of this article is the following
question, which arises naturally as an ingredient in both
constructions, but is possibly of independent interest:
\begin{center}
\emph{For primes $p$, are all residue classes mod $p$ represented by\\
the positive integers that are both squarefree and $p$-smooth?}
\end{center}
(Recall that an integer $n$ is called \emph{$y$-smooth} if every prime
divisor of $n$ is $\le y$.)  Since there are $2^{\pi(p)}$ squarefree,
$p$-smooth, positive integers and only $p$ residue classes mod $p$, one
heuristically expects the answer to be yes, at least for large $p$.
(However, note that $y=p$ is best possible, since the zero residue class
mod $p$ is not attained by a $y$-smooth number for any $y<p$.) We will
show that, with two exceptions, this is indeed the case:
\begin{thm}\label{thm:main}
Let $p$ be a prime different from $5$ and $7$, and $a\in\Z$. Then there
is a squarefree, $p$-smooth, positive integer $n$ such that
$n\equiv a\pmod{p}$.
\end{thm}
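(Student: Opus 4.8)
The plan is to reduce the problem to producing a *single* squarefree $p$-smooth number in a convenient multiplicative position, and then to use a greedy/inductive argument on primes. Write $P(p) = \prod_{q \le p} q$ for the product of all primes up to $p$; this is itself squarefree and $p$-smooth, and its residue class mod $p$ is $0$ (since $p \mid P(p)$), which handles $a \equiv 0$. So assume $a \not\equiv 0 \pmod p$, and work in $\Zpx$. Let $\cS \subseteq \Zpx$ be the set of residues represented by squarefree $p$-smooth numbers coprime to $p$; equivalently, $\cS$ is the set of products $\prod_{q \in T} q \bmod p$ over subsets $T$ of the primes $q < p$. Note $\cS$ is closed under multiplication by each prime $q < p$ in the following weak sense: if $s \in \cS$ is represented by a squarefree number $n$ with $q \nmid n$, then $qs \in \cS$. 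The goal is $\cS = \Zpx$.

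My first approach would be a counting/character-sum argument: the number of subsets $T \subseteq \{q < p\}$ with $\prod_{q\in T} q \equiv a$ is
\[
\frac{1}{p-1}\sum_{\chi \bmod p} \overline{\chi}(a) \prod_{q < p}\bigl(1 + \chi(q)\bigr),
\]
and the $\chi = \chi_0$ term contributes $2^{\pi(p)-1}/(p-1)$, which is enormous. The hard part will be bounding the nonprincipal terms $\prod_{q<p}(1+\chi(q))$: unlike a full product over residues, a product over *primes* can be large if $\chi(q) = 1$ for many small primes $q$, and controlling this requires either a zero-free region / explicit bound on $L(1,\chi)$-type quantities or a clever combinatorial substitute. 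This is where I expect the real obstacle to lie, and it is presumably why the two small exceptions $p = 5, 7$ survive — for tiny $p$ the "main term" is not yet overwhelming.

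As a more robust alternative (and probably the route the authors take), I would argue inductively: list the primes $2 = q_1 < q_2 < \cdots < q_k = $ (largest prime $< p$), and show that the partial subset-product sets $\cS_j = \{\prod_{i \in T} q_i : T \subseteq \{1,\dots,j\}\} \bmod p$ grow until they fill $\Zpx$. Since $\cS_{j} = \cS_{j-1} \cup q_j \cS_{j-1}$, the set stabilizes only if $\cS_{j-1}$ is already a union of cosets of the subgroup $\langle q_j \rangle$; combining this over all $j$ forces any proper stable $\cS$ to be a union of cosets of $\langle q_1, \dots, q_k\rangle = \Zpx$ (as the primes $< p$ generate $\Zpx$ — indeed a single primitive root below $p$ already suffices, and Burgess/Vinogradov-type bounds give a primitive root that is $p$-smooth for $p$ not too small), a contradiction. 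To make this fully effective one still needs a lower bound on $|\cS_j|$ for the initial range of $j$ — e.g. $|\cS_j| \ge 2^j$ as long as $2^j \le$ something, using squarefreeness to avoid collisions — and then a clean doubling-type estimate $|\cS_{j}| \ge \min(|\cS_{j-1}| \cdot (1 + \delta), p-1)$, Kneser/Cauchy–Davenport-style, to finish. The crux in either formulation is the same: ruling out the "conspiracy" in which the multiplicative structure of the small primes mod $p$ is too degenerate, which for all but finitely many $p$ cannot happen, with $p \in \{5,7\}$ the genuine exceptions.
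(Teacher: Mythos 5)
Your proposal is a plan rather than a proof, and both routes stall at exactly the points you yourself flag. In the character-sum formulation, the nonprincipal terms $\prod_{q<p}\bigl(1+\chi(q)\bigr)$ admit no useful unconditional bound: if $\chi(q)=1$ for the small primes the product is genuinely enormous, and no standard tool (P\'olya--Vinogradov, Burgess, zero-free regions) controls a product over primes in which each prime appears at most once --- this is the whole difficulty, not a detail to be filled in. In the inductive formulation the gap is sharper. Since each prime is adjoined only once, the only general estimate for $\cS_j=\cS_{j-1}\cup q_j\cS_{j-1}$ is $|\cS_j|\ge|\cS_{j-1}|+1$ unless $\cS_{j-1}$ is a union of cosets of $\langle q_j\rangle$; Kneser and Cauchy--Davenport concern sumsets $A+B$ over a whole set $B$, not the union of a set with a single multiplicative translate, so the ``doubling-type estimate $|\cS_j|\ge\min\bigl((1+\delta)|\cS_{j-1}|,\,p-1\bigr)$'' you invoke does not exist in that generality. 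With only $\pi(p)\sim p/\log p$ steps available, growth by one element per step cannot reach $p-1$ residues, so the induction does not close. Your ``combining over all $j$'' step is also invalid: stabilization at step $j$ constrains $\cS_{j-1}$ relative to $\langle q_j\rangle$ only, and the final set $\cS_k$ need not be invariant under multiplication by any earlier prime (each prime may be used at most once), so there is no single ``stable $\cS$'' that is forced to be a union of cosets of $\langle q_1,\dots,q_k\rangle=(\Z/p\Z)^*$. Finally, any correct argument must be effective enough to isolate the genuine exceptions $p=5,7$; your sketch gives no handle on the finitely many potentially bad primes.

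For comparison, the paper's argument sidesteps both obstacles. An explicit P\'olya--Vinogradov bound (Frolenkov--Soundararajan) shows that every coset of every subgroup $H_{d,p}\le(\Z/p\Z)^*$ of index $d<\log p+1$ contains a squarefree $j<p$ (automatically $p$-smooth). Separately, one considers the residues representable in many ways as products $qr$ of two distinct primes below $p$ avoiding the few primes used in those coset representatives; this set is dense, and a theorem of Lev on $h$-fold sumsets of dense subsets of $\Z/(p-1)\Z$ (applied via discrete logarithms) shows that some bounded power $\cA^k$ with $k<2\log p+3$ contains a subgroup $H_{d,p}$ with $d<\log p+1$, the abundance of representations $qr$ guaranteeing a squarefree choice of primes. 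Multiplying the squarefree coset representatives by these subgroup elements, which use disjoint sets of primes, covers all nonzero classes for $p>3\times10^8$, and a computer verification handles the remaining primes $7<p<3\times10^8$, where the cases $p=5,7$ fail.
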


The proof of Theorem~\ref{thm:main} consists of three largely independent
steps that we carry out in Sections~\ref{sec:char}--\ref{sec:smallp}.
Our key tools include a numerically explicit form of the P\'olya--Vinogradov
inequality, see Frolenkov and Soundararajan \cite{FS}, and a combinatorial result of Lev \cite{L} on $h$-fold sums of
dense sets.
In Section~\ref{sec:euclid} we apply Theorem~\ref{thm:main} to variants of
Euclid's argument, and thus show how one can generate all of the primes out
of nothing.  In the final section we mention a few related unsolved problems.

Related results on prime generators of Euclid type may be found in
\cite{bold}, \cite{PT}, \cite{Wa}, \cite{W}, and the references of
those papers.

\section{Large $p$: character sums}\label{sec:char}

For a prime $p$ and a positive integer $d\mid p-1$, let 
$$
H_{d,p}=\{h\in(\Z/p\Z)^*:~h^{(p-1)/d}~\equiv~1\pmod{p}\}
$$
denote the subgroup of $(\Z/p\Z)^*$ of index $d$.

\begin{prop}
\label{prop:cosets}
Let $p>3\times10^8$ be a prime and suppose $d\mid p-1$ with $d<\log p+1$. 
For each nonzero residue $m\pmod p$ there is some squarefree number
$j<p$ with $j\in mH_{d,p}$.
\end{prop}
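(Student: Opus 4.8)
The plan is to detect membership in the coset $mH_{d,p}$ by Dirichlet characters and to detect squarefreeness by M\"obius inversion, thereby reducing the statement to a bound on incomplete character sums twisted by $\mu^2$. Since $(\Z/p\Z)^*$ is cyclic, $H_{d,p}$ is exactly the subgroup of $d$-th powers, so the characters mod $p$ that are trivial on $H_{d,p}$ are precisely the $d$ characters $\chi$ whose order divides $d$; call this set $X_d$. For $j$ coprime to $p$ one has
\[
\mathbf 1[j\in mH_{d,p}]=\frac1d\sum_{\chi\in X_d}\chi(j)\overline{\chi(m)},
\]
and every squarefree $j$ with $1\le j\le p-1$ is automatically coprime to $p$. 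Hence, writing $N$ for the number of such $j$ lying in $mH_{d,p}$,
\[
N=\frac1d\sum_{\chi\in X_d}\overline{\chi(m)}\,S_\chi,\qquad
S_\chi:=\sum_{\substack{1\le j\le p-1\\ \mu^2(j)=1}}\chi(j),
\]
and it suffices to show $N>0$.

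The term $\chi=\chi_0$ contributes $S_{\chi_0}=Q(p-1)$, where $Q(x):=\#\{n\le x:\mu^2(n)=1\}=\tfrac{6}{\pi^2}x+O(\sqrt x)$; here one inserts an explicit lower bound, say $Q(x)\ge\tfrac6{\pi^2}x-c_0\sqrt x$ for an absolute constant $c_0$. Each nontrivial $\chi\in X_d$ is primitive, since its modulus is the prime $p$. Writing $\mu^2(j)=\sum_{a^2\mid j}\mu(a)$ and unfolding,
\[
S_\chi=\sum_{a\le\sqrt{p-1}}\mu(a)\chi(a)^2\sum_{b\le(p-1)/a^2}\chi(b).
\]
For $a$ below a cutoff $A$ I would bound the inner sum using a numerically explicit P\'olya--Vinogradov inequality $|\sum_{b\le y}\chi(b)|\le c_1\sqrt p\log p$, with $c_1$ taken from Frolenkov--Soundararajan \cite{FS} (and, if it helps, treating even and odd $\chi$ separately in order to use the sharper constant available in each case); for $a>A$ I would use the trivial bound $|\sum_{b\le y}\chi(b)|\le y$. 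This gives $|S_\chi|\le A\,c_1\sqrt p\log p+(p-1)\sum_{a>A}a^{-2}\le A\,c_1\sqrt p\log p+(p-1)/A$, and taking $A$ of order $p^{1/4}(\log p)^{-1/2}$ to balance the two terms yields $|S_\chi|\le\bigl(2\sqrt{c_1}+o(1)\bigr)p^{3/4}(\log p)^{1/2}$.

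Combining these estimates, using that $X_d$ has exactly $d-1$ nontrivial characters and $|\overline{\chi(m)}|\le1$,
\[
N\ge\frac1d\Bigl(\tfrac6{\pi^2}(p-1)-c_0\sqrt p-(d-1)\bigl(2\sqrt{c_1}+o(1)\bigr)p^{3/4}(\log p)^{1/2}\Bigr).
\]
Since the hypothesis $d<\log p+1$ gives $d-1<\log p$, the quantity $N$ is positive as soon as $\tfrac6{\pi^2}(p-1)$ exceeds $c_0\sqrt p+\bigl(2\sqrt{c_1}+o(1)\bigr)p^{3/4}(\log p)^{3/2}$, that is, once $p^{1/4}$ exceeds an absolute constant times $(\log p)^{3/2}$. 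I expect this last numerical inequality to be the real work, and to be the reason the statement carries the specific threshold $p>3\times10^8$: the condition $d<\log p+1$ is essentially the largest one can afford, so there is very little slack, and making the inequality hold requires a good explicit P\'olya--Vinogradov constant, a good explicit estimate for $Q(x)$, and a carefully optimized cutoff $A$ (perhaps slightly refined, e.g.\ by pulling out the smallest values of $a$ by hand). The character-sum bookkeeping and the structure of $X_d$ are routine by comparison.
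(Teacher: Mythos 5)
Your proposal follows essentially the same route as the paper: detect the coset $mH_{d,p}$ by the $d$ characters of order dividing $d$, detect squarefreeness by M\"obius inversion over square divisors, bound the nonprincipal terms via an explicit P\'olya--Vinogradov inequality below a cutoff of order $p^{1/4}$ and trivially above it, and compare against the principal-character main term with $d-1<\log p$. The only differences are minor (the paper uses Rogers's Schnirelmann-density bound $\frac{53}{88}(p-1)$ for the main term instead of $\frac{6}{\pi^2}x+O(\sqrt{x})$, and a fixed cutoff $\frac12 p^{1/4}$ rather than your optimized $A$), and the explicit numerical check you defer does go through at $p>3\times 10^8$ with the Frolenkov--Soundararajan constants, exactly as the paper carries out.
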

\begin{proof}
We may assume that $d>1$, since otherwise we can take $j=1$.
Let $\chi$ be a character mod $p$ of order $d$.  Since $\chi^i$ for
$i=1,\dots,d$ runs over all of the characters mod $p$ of order
dividing $d$, we have that
\begin{equation}
\label{eq:charsum}
\frac1d\sum_{i=1}^d\sum_{j<p}\mu^2(j)\chi^{i}(j)\overline{\chi}^i(m)
\end{equation}
is the number of squarefree numbers $j<p$ with $\chi(j)=\chi(m)$, and so is the
number of squarefree numbers smaller than $p$ in the coset $mH_{d,p}$.
The principal character (the term when $i=d$) contributes
$$
\frac1d\sum_{j<p}\mu^2(j)
$$
to the sum.  This expression is $\sim\frac{6}{\pi^2}p/d$ as $p\to\infty$.  One
can get an explicit lower bound valid for all $p$ via the Schnirelmann
density of the squarefree numbers,
see \cite{R}. 
Thus,
\begin{equation}
\label{eq:main}
\frac1d\sum_{j<p}\mu^2(j)\ge\frac{53}{88d}(p-1).
\end{equation}

Our task is then to show that the other terms in \eqref{eq:charsum} are small
in comparison.
By recognizing a squarefree number by an inclusion-exclusion over square
divisors, we have for $1\le i<d$,
$$
\sum_{j<p}\mu^2(j)\chi^i(j)=\sum_{v\ge1}\mu(v)\chi^i(v^2)\sum_{j<p/v^2}\chi^i(j).
$$
We may discard the term $v=1$ since it is 0.  For $v>\frac12p^{1/4}+1$, we may use the
trivial estimate
$$
\sum_{v>\frac12p^{1/4}+1}\left|\sum_{j<p/v^2}\chi^i(j)\right|\le
\sum_{v>\frac12p^{1/4}+1}\,\sum_{j<p/v^2}1<p\sum_{v>\frac12p^{1/4}+1}\frac1{v^2}<2p^{3/4}.
$$
For $v\le \frac12p^{1/4}+1$ we use an explicit form of the P\'olya--Vinogradov inequality,
see \cite{FS}, where we may divide the estimate for even characters by 2
since our character sum is over an initial interval.
This gives
$$
\sum_{\substack{v>1\\v\le \frac12p^{1/4}+1}}\left|\sum_{j<p/v^2}\chi^i(j)\right|
\le\sum_{\substack{v>1\\v\le \frac12p^{1/4}+1}}\left(\frac1{2\pi}p^{1/2}\log p
+p^{1/2}\right)
\le \frac1{4\pi}p^{3/4}\log p+\frac12p^{3/4}.
$$
Thus,
$$
\left|\sum_{j<p}\mu^2(j)\chi^i(j)\right|\le p^{3/4}\left(\frac1{4\pi}\log p+\frac52\right).
$$
Hence
$$
\frac1d\sum_{i=1}^{d-1}\left|\sum_{j<p}\mu^2(j)\chi^i(j)\overline{\chi}^i(m)\right|
\le\left(1-\frac1d\right)p^{3/4}\left(\frac1{4\pi}\log p+\frac52\right),
$$
and we would like this expression to be smaller than the one in \eqref{eq:main}.
That is, we would like the inequality
$$
\frac{53(p-1)}{88d}>
\left(1-\frac1d\right)p^{3/4}\left(\frac1{4\pi}\log p+\frac52\right),
$$
to hold true, or equivalently,
$$
\frac{53(p-1)}{88p^{3/4}}>(d-1)\left(\frac1{4\pi}\log p+\frac52\right).
$$
Using $d<\log p+1$, we see that this inequality holds for all
$p>3\times10^8$.
\end{proof}
\begin{rmk}
{\rm Instead of \cite{FS} for our estimate of the character sum, we might
have used \cite{P} or we might have used the ``smoothed" version in \cite{LPS}.
The former would require raising the lower limit $3\times10^8$ slightly,
while the latter would likely lead to a reduction in the lower limit,
but at the expense of a more complicated proof.}
\end{rmk}

For a prime $p>3\times10^8$ and an integer $d\mid p-1$ with $d<\log p+1$,
let $\cC_{d,p}$ denote a set of squarefree coset representatives
for $H_{d,p}$ smaller than $p$ as guaranteed to exist by Proposition
\ref{prop:cosets}.  Also, let $\S_{d,p}$ denote the set of primes
that divide some member of $\cC_{d,p}$ and let $\S_p$ be the union
of all of the sets $\S_{d,p}$ for $d\mid p-1$, $d<\log p+1$.

Let $\omega(n)$ denote the number of distinct prime divisors of $n$.
It is known that $\omega(n)\le(1+o(1))\log n/\log\log n$ as $n\to\infty$.
We have the weaker, but explicit inequality: $\omega(n)<\log n$ for $n>6$.
To see this, note that it is true for $\omega(n)\le2$, since it holds
for $n=7$, and for $n\ge8$ we have $\log n>2$.  If $\omega(n)=k\ge3$, then
$n\ge6\cdot5^{k-2}$, so that $k\le(\log n+\log(25/6))/\log 5$, which
is smaller than $\log n$ for $n\ge11$.  But $k\ge3$ implies $n\ge30$.

As a corollary, we conclude that under the hypotheses of Proposition
\ref{prop:cosets}, we have each $\#\S_{d,p}<d\log p$ and $\#\S_p<\frac12(\log p+1)^3$.

\section{Proof of Theorem~\ref{thm:main} for large $p$}
Assume the prime $p$ exceeds $3\times10^8$. 
Let $\S_p$ denote the set of primes identified at
the end of the last section, let $N=p-1$, and let 
$$
K=\pi(N)-\#\S_p>\pi(N)-\frac12(\log p+1)^3
$$ 
denote the number of remaining primes smaller than $p$.

For an integer $m$ with $0<m<p$, let $f(m)$ denote
the number of unordered pairs of distinct primes $q,r$ with $q,r<p$,
$qr\equiv m\pmod p$ and $q,r\not\in\S_p$.
Set 
$$
\cA=\{m\in(0,p):f(m)>K/\sqrt{p}\}\cup\{1\}, \quad A=\#\cA.  
$$
\begin{lem}
\label{lem:Aest}
For $p>3\times 10^8$, we have
$$
A>\frac{N}{\log N}+2.
$$
\end{lem}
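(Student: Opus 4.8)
The plan is to prove Lemma~\ref{lem:Aest} by a double-counting argument applied to $f$, exploiting that the pairs contributing to any single value $f(m)$ are thinly spread. First I would record the total. Let $\cQ$ be the set of primes $q<p$ with $q\notin\S_p$, so $\#\cQ=K$. Every unordered pair $\{q,r\}$ of distinct elements of $\cQ$ has product $qr\not\equiv0\pmod p$ (since $q,r\ne p$), and it contributes to $f(m)$ for exactly one $m$ with $0<m<p$; hence
$$\sum_{0<m<p}f(m)=\binom{K}{2}.$$

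Next I would bound $f(m)$ from above, uniformly in $m$. Fix $m$ with $0<m<p$. The map $q\mapsto mq^{-1}\bmod p$ is an involution of $(\Z/p\Z)^*$, and an unordered pair $\{q,r\}$ contributes to $f(m)$ precisely when it is a $2$-element orbit of this involution lying entirely inside $\cQ$. Distinct such orbits are disjoint subsets of $\cQ$, so $f(m)\le\tfrac12 K$. It is this factor $\tfrac12$, rather than the trivial bound $f(m)\le K$, that makes the whole argument go through, as noted below.

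Now I would combine the two facts. Put $\cA'=\{m\in(0,p):f(m)>K/\sqrt p\}$ and $A'=\#\cA'$; then $\cA\supseteq\cA'$, so $A\ge A'$, and
$$\binom{K}{2}=\sum_{0<m<p}f(m)\le\tfrac12 A'K+(p-1)\cdot\frac{K}{\sqrt p}<\tfrac12 A'K+\sqrt p\,K.$$
Multiplying through by $2/K>0$ and simplifying gives $A\ge A'>K-1-2\sqrt p$. Since $K>\pi(N)-\tfrac12(\log p+1)^3$ with $N=p-1$, it then suffices to verify that
$$\pi(N)-\tfrac12(\log p+1)^3-1-2\sqrt p>\frac{N}{\log N}+2\qquad(p>3\times10^8),$$
for which I would invoke an explicit Rosser--Schoenfeld/Dusart-type lower bound such as $\pi(N)\ge\frac{N}{\log N}\bigl(1+\tfrac1{\log N}\bigr)$ for $N\ge599$, so that the left side exceeds $\frac{N}{\log N}+\frac{N}{(\log N)^2}-2\sqrt p-\tfrac12(\log p+1)^3-1$; the term $N/(\log N)^2$ comfortably dominates $2\sqrt p+\tfrac12(\log p+1)^3+3$ throughout the range $p>3\times10^8$, which is a routine (if slightly tedious) computation once the lower-order terms are crudely majorized to make the difference monotone in $p$.

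The only real content here is the estimate $f(m)\le\tfrac12 K$: with merely $f(m)\le K$ the same argument would yield only $A>\tfrac12(K-1)-\sqrt p\approx\tfrac12\pi(N)$, which is \emph{not} larger than $N/\log N$, so the lemma would fail by this route. Thus the involution observation — which supplies exactly the missing factor of two — is the crux, while everything else is bookkeeping together with explicit estimates for $\pi$.
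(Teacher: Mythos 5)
Your proposal is correct and follows essentially the same route as the paper: the identity $\sum_m f(m)=\binom{K}{2}$, the bound $f(m)\le\tfrac12K$ (your involution argument is just a rephrasing of the paper's observation that distinct pairs counted by $f(m)$ share no prime), the split over $m\in\cA$ versus $m\notin\cA$ giving $A>K-1-2\sqrt p$, and a final appeal to an explicit Rosser--Schoenfeld-type lower bound for $\pi(N)$ together with $p>3\times10^8$.
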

\begin{proof}
Evidently,
\begin{equation}
\label{eq:ftot}
\sum_{m=1}^{p-1}f(m)=\binom{K}2=\frac12K(K-1).
\end{equation}
Further, if two pairs $q,r$ and $q',r'$ are counted by $f(m)$, then either they have
no prime in common or they are the same pair.  Thus, for each $m$,
\begin{equation}
\label{eq:fmax}
f(m)\le \frac12K.
\end{equation}
Since
$$
\sum_{m\not\in\cA}f(m)\le(N-A)K/\sqrt{p},
$$
we have by \eqref{eq:ftot} that
$$
\sum_{m\in\cA}f(m)\ge\frac12K(K-1)-(N-A)K/\sqrt{p}.
$$
Thus, from \eqref{eq:fmax},
$$
A\ge\frac{1}{K/2}\sum_{m\in\cA}f(m)\ge K-1-2(N-A)/\sqrt{p}
>K-2\sqrt{p}-1.
$$
Since $K>\pi(N)-\frac12(\log p+1)^3$, by
using inequality (3.1) in \cite{RS} and $p>3\times10^8$, we have
the inequality in the lemma.
\end{proof} 

For a positive integer $k$, let $\cA^k$ denote the set of $k$-fold
products of members of $\cA$.
\begin{lem}
\label{lem:lev}
There are positive integers $d<\log p+1$, $k< 2\log p+3$ such that
$\cA^k$ contains a 
subgroup $H_{d,p}$ of $(\Z/p\Z)^*$.
\end{lem}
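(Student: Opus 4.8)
The plan is to combine Lemma~\ref{lem:Aest} with Lev's theorem on $h$-fold sumsets of dense subsets of $\Z/N\Z$. Write $N=p-1$, and transfer everything to the additive group $\Z/N\Z$ via a fixed generator $g$ of $(\Z/p\Z)^*$: let $\cB\subseteq\Z/N\Z$ be the set of exponents $b$ such that $g^b\in\cA$. Then $\card{\cB}=A$, and by Lemma~\ref{lem:Aest} we have $\card{\cB}>N/\log N+2$. The point of the ``$+2$'' is to absorb the elements $0\in\cB$ (coming from $1\in\cA$) and to have genuine slack, so that after possibly discarding a couple of elements we still have a set of density a bit above $1/\log N$ that, crucially, contains $0$. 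A $k$-fold product set $\cA^k$ corresponds exactly to the $k$-fold sumset $k\cB=\cB+\cdots+\cB$ in $\Z/N\Z$, and a subgroup $H_{d,p}$ of index $d$ corresponds to the unique subgroup of $\Z/N\Z$ of index $d$, namely $d\cdot(\Z/N\Z)$. So the statement to prove becomes: \emph{there exist $d<\log p+1$ and $k<2\log p+3$ such that $k\cB$ contains the index-$d$ subgroup of $\Z/N\Z$.}

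The engine is Lev's result \cite{L}: if $\cB\subseteq\Z/N\Z$ has $0\in\cB$ and $\card{\cB}$ exceeds roughly $N/(\ell+1)$ for an integer parameter $\ell$, then the $\ell$-fold sumset $\ell\cB$ is already a union of cosets of some subgroup — more precisely $\ell\cB$ equals $\ell\cB'$ where $\cB'$ is the union of $\cB$ with that subgroup, and in the non-degenerate range $\ell\cB$ contains a subgroup whose index divides something controlled by $\ell$ and by $N/\card{\cB}$. First I would extract from $\card{\cB}>N/\log N$ the inequality $\card{\cB}>N/(\ell+1)$ with $\ell=\fl{\log p}$ (using $\log N<\log p$), so Lev's hypothesis applies with this $\ell$. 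Lev's theorem then hands us a subgroup $G=d\cdot(\Z/N\Z)$ with $d\le\ell$ (hence $d<\log p+1$) such that the sumset $\ell\cB$, or at worst $(\ell+c)\cB$ for a small explicit constant $c$, already contains $G$; chasing the exact form of Lev's conclusion gives the clean bound $k<2\log p+3$. I would also need to check that $G$ is one of the subgroups for which $\cC_{d,p}$ and $\S_p$ were defined, i.e. that $d\mid p-1$ and $d<\log p+1$, which is automatic since $d\mid N=p-1$ and $d\le\ell=\fl{\log p}$.

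The main obstacle is matching the precise quantitative statement of Lev's theorem to the shape we need here. Lev's result is typically phrased as an identity $\ell\cB = \ell\cB^{\ast}$ where $\cB^{\ast}$ is $\cB$ together with a coset-completion, and one must unwind this to see which subgroup actually lies inside $\ell\cB$ and to bound its index purely in terms of $\ell$ rather than in terms of hidden structural quantities; the degenerate cases (where $\cB$ sits inside a proper subgroup, or where the relevant subgroup is all of $\Z/N\Z$, which is actually the good case giving $d=1$) need to be handled separately but are easy. A secondary technical point is keeping track of $0\in\cB$: $1\in\cA$ by fiat precisely so that this holds, and it is needed both to apply Lev's theorem in its ``$0\in\cB$'' normalization and to ensure the sumsets are nested, $\cB\subseteq 2\cB\subseteq\cdots$, so that ``contains $H_{d,p}$ for some $k<2\log p+3$'' is not spoiled by parity or by $k$ being slightly larger than Lev's $\ell$. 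Once these bookkeeping issues are pinned down, the conclusion follows directly, and the bounds $d<\log p+1$, $k<2\log p+3$ drop out of $\ell=\fl{\log p}$ with room to spare.
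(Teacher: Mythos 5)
Your proposal follows essentially the same route as the paper: pass to discrete logarithms to the base of a primitive root, apply Lev's theorem using the density bound of Lemma~\ref{lem:Aest} (the ``$+2$'' there indeed exists to feed the denominator in $\kappa=\lceil(N-1)/(A-2)\rceil$), and pull the resulting subgroup back to $(\Z/p\Z)^*$, obtaining $d\le\kappa<\log p+1$ and $k\le2\kappa+1<2\log p+3$. The one correction to your sketch of the key input: Lev's Theorem~$2'$ is stated for the \emph{integer} set of discrete logs in $[0,N)$, not as a Kneser-type coset statement in $\Z/N\Z$ --- it gives $d'\le\kappa$ and $k\le2\kappa+1$ with the $k$-fold integer sumset containing $N$ consecutive multiples of $d'$, which upon reduction mod $N$ is exactly the subgroup of index $d=(d',N)$, so the ``unwinding'' you worry about is immediate.
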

\begin{proof}
Let $g$ be a primitive root modulo $p$ and let $\cA'$ denote the set of
discrete logarithms of members of $\cA$ to the base $g$.  That is, $j\in\cA'$
with $0\le j<N$ if and only if $g^j\pmod p\in\cA$.
We now apply a theorem of Lev \cite[Theorem $2'$]{L} to the set $\cA'$.
With $\kappa:=\lceil(N-1)/(A-2)\rceil$, this
result implies that 
there are positive integers $d'\le\kappa$ and $k\le 2\kappa+1$ such
that $k\cA'$ contains $N$ consecutive multiples of $d'$.  Here, $k\cA'$
denotes the set of integers that can be written as the sum of $k$
members of $\cA'$.  Thus, reducing mod $N$, the set $k\cA'$ contains
a subgroup of $\Z/N\Z$ of index $d:=(d',N)$.  Hence, $\cA^k$ contains the
subgroup $H_{d,p}$ of $(\Z/p\Z)^*$.  Further, from Lemma \ref{lem:Aest},
we have $d\le d'\le\kappa< \log p+1$, which completes the proof.
\end{proof}

\begin{lem}
\label{lem:Hdp}
For the subgroup $H_{d,p}$ of $(\Z/p\Z)^*$ produced in Lemma \ref{lem:lev},
each member of $H_{d,p}$ has a representation modulo $p$ as a squarefree
number involving primes smaller than $p$ and not in $\S_{p}$ (and so not
in $\S_{d,p}$).
\end{lem}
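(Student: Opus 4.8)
The plan is to combine the three lemmas already established with the explicit character-sum bound from Proposition~\ref{prop:cosets}, though we must be a little careful because Proposition~\ref{prop:cosets} controls cosets of $H_{d,p}$, not the subgroup itself. Recall from Lemma~\ref{lem:lev} that $H_{d,p}\subseteq\cA^k$ with $d<\log p+1$ and $k<2\log p+3$, where $\cA$ is built from products $qr$ of two distinct primes $q,r<p$ with $q,r\notin\S_p$ (together with the element $1$). Consequently every $h\in H_{d,p}$ can be written modulo $p$ as a product of at most $2k$ primes, all smaller than $p$ and none lying in $\S_p$. The obstruction to concluding immediately is that this product need not be squarefree: the $2k$ primes coming from the $k$ factors of $\cA$ could repeat.

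First I would fix $h\in H_{d,p}$ and write $h\equiv n_0\pmod p$ with $n_0=q_1r_1\cdots q_kr_k$ a product of primes, each $<p$ and each outside $\S_p$, drawing the $q_i,r_i$ from the representations guaranteed by the definition of $\cA$ (with the convention that a factor equal to $1\in\cA$ contributes no primes). Let $e=\rad(n_0)$ be the squarefree kernel; then $e$ is squarefree, $p$-smooth, uses only primes $<p$ outside $\S_p$, and $e\equiv n_0 u^{-1}\pmod p$ for some integer $u$ whose prime factors are among those of $n_0$ — in particular $u$ is itself a product of primes $<p$ none in $\S_p$, and $u\pmod p$ lies in some subgroup $H_{d',p}$ (trivially, it lies in $(\Z/p\Z)^*$, so take $d'=1$ if desired, but in fact we only need that $u\pmod p$ is \emph{some} nonzero residue). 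The key observation is that $u^{-1}\pmod p$ can be corrected by multiplying by a squarefree number smaller than $p$: apply Proposition~\ref{prop:cosets} with $m\equiv u\pmod p$ and $d=1$ — or more simply, repeat the coset argument — to obtain a squarefree $j<p$ with $j\equiv u\pmod p$. However $j$ may share prime factors with $e$, or use primes in $\S_p$, so instead I would run Proposition~\ref{prop:cosets} in the form that produces a squarefree coset representative using only primes \emph{outside} $\S_p$ and coprime to $e$; the set of excluded primes has size $O((\log p)^3)+\omega(e)=O((\log p)^3)$, which is negligible against $p$, so the same Pólya–Vinogradov estimate goes through essentially unchanged (the trivial and main terms in \eqref{eq:charsum} are perturbed by $O(p^{3/4}(\log p)^3)$-type quantities that remain dominated by the $\frac{53}{88}(p-1)$ main term for $p$ large).

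The main obstacle I anticipate is bookkeeping the excluded-prime set so that the character-sum argument of Proposition~\ref{prop:cosets} still applies: one must re-derive an inclusion–exclusion for squarefree integers coprime to a fixed modulus and avoiding $\S_p$, and check that the resulting lower bound on the count of admissible $j<p$ (analogous to \eqref{eq:main}) still beats the error terms for all $p>3\times10^8$. This is routine but must be done with explicit constants. Once such a $j\equiv u\pmod p$ is in hand with $\gcd(j,e)=1$ and $j$ built from primes outside $\S_p$, the integer $n=ej$ is squarefree, $p$-smooth, uses only primes $<p$ not in $\S_p$, and satisfies $n=ej\equiv (n_0u^{-1})u\equiv n_0\equiv h\pmod p$, which is exactly what Lemma~\ref{lem:Hdp} asserts. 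Finally I would remark that the ``and so not in $\S_{d,p}$'' clause is immediate since $\S_{d,p}\subseteq\S_p$ by definition.
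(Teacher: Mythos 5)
Your reduction-to-the-radical step does not work, and the gap is at the heart of the argument. After writing $h\equiv n_0=q_1r_1\cdots q_kr_k\pmod p$ and setting $e=\rad(n_0)$, $u=n_0/e$, you need a squarefree, $p$-smooth $j$ with $j\equiv u\pmod p$ (coprime to $e$, avoiding $\S_p$). But Proposition~\ref{prop:cosets} cannot supply this: with $d=1$ it only produces a squarefree $j<p$ in the coset $uH_{1,p}=(\Z/p\Z)^*$, i.e.\ it says nothing about the specific class of $u$; and if $j$ lands only in some coset $uH_{d,p}$ rather than the class of $u$ itself, then $ej$ represents some element of $H_{d,p}$, not the prescribed element $h$. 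Pinning down the exact residue class is a different problem from pinning down a coset of index $d<\log p+1$: a class mod $p$ has exactly one representative in $(0,p)$ (so ``squarefree $j<p$ with $j\equiv u$'' just means the least positive residue of $u$ happens to be squarefree, which you cannot guarantee), whereas widening the range of $j$ beyond $p$ destroys automatic $p$-smoothness and forces a character sum over all $p-1$ characters, whose error terms swamp the main term for any range $\ll p^{3/2}$. In other words, your ``correction'' step is essentially the statement of Theorem~\ref{thm:main} itself, so the argument is circular. The modification of the character-sum argument to exclude $O((\log p)^3)$ primes is not the issue; the issue is that no character-sum argument of this shape hits a single residue class within $(0,p)$.

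The paper's proof avoids any correction step by exploiting a structural fact you did not use: if two pairs $q,r$ and $q',r'$ of primes in $(0,p)$ satisfy $qr\equiv q'r'\equiv m\pmod p$, then the pairs are either identical or share no prime (this is the observation behind \eqref{eq:fmax}). Hence the $>K/\sqrt p$ representations of each $m_i\in\cA$ are pairwise prime-disjoint, and one can choose the representations $m_i\equiv q_ir_i$ greedily: when treating $m_i$, at most $2(i-1)\le 2(k-1)$ primes are already in use, each ruling out at most one representation of $m_i$, so $2(k-1)+1<4\log p+5$ available representations suffice to keep the full product squarefree; and $K/\sqrt p>\sqrt p/\log p>4\log p+5$ for $p>3\times10^8$ by the computation in Lemma~\ref{lem:Aest}. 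If you want to salvage your write-up, replace the radical-plus-correction step by this greedy selection; as it stands, the proposal has a genuine gap.
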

\begin{proof}
Suppose that $m\in\cA^k$, so that 
\begin{equation}
\label{eq:prod}
m=m_1m_2\dots m_k\equiv (q_1r_1)(q_2r_2)\dots(q_kr_k)\pmod p,
\end{equation}
where each $m_i\in\cA$ and $m_i\equiv q_ir_i\pmod p$.  
This last product over primes is $p$-smooth, but is not
necessarily squarefree.  However, each $m_i\in\cA$ 
has many representations as $q_ir_i$, in fact at least $K/\sqrt{p}$
representations, with each representation involving two new primes.
So, if $k$ is small enough, there will be a representation of each
$m_i$ so that the product of primes in \eqref{eq:prod} is indeed squarefree. 
Now $k<2\log p+3$, so having at least $2(k-1)+1<4\log p+5$
representations for each member of $\cA$ is sufficient.  The number
of representations exceeds $K/\sqrt{p}$ and by the same calculation
that gave us the last step in 
Lemma \ref{lem:Aest}, we have this expression exceeding $\sqrt{p}/\log p$.
This easily exceeds $4\log p+5$ for $p>3\times10^8$.
\end{proof}

It is now immediate that every residue class mod $p$ contains a squarefree
number with prime factors at most $p$.  Indeed this is true for $0\pmod p$
--- take $p$ as the representative.  For a nonzero class $j\pmod p$, find
that member $m$ of $\cC_{d,p}$ with $j\in mH_{d,p}$, and write $j=mh$
with $h\in H_{d,p}$.  We have seen that each member of $H_{d,p}$ 
has a squarefree representative using primes smaller than $p$
and not in $\S_{d,p}$.  Since $m$ is squarefree and uses only primes in
$\S_{d,p}$ it follows that $mh$ is also squarefree using only primes smaller
than $p$.  This completes the proof of Theorem~\ref{thm:main} for
primes $p>3\times10^8$.

\section{Verification of Theorem~\ref{thm:main} for small $p$}\label{sec:smallp}
It remains only to verify the theorem for $p<3\times10^8$.  For $p>10^4$
we use the following simple strategy: Compute a primitive root
$g\pmod{p}$, and find pairwise coprime, squarefree, $p$-smooth numbers
$m_i$ such that $m_i\equiv g^{2^i}\pmod{p}$ for each nonnegative
integer $i\le\log_2(p-2)$. If this is possible then, given any
nonzero residue $n\pmod{p}$, we have $n\equiv g^k\pmod{p}$ for some
integer $k\in[0,p-2]$. Expressing $k$ in binary, viz.\ $k=\sum_{0\le
i\le\log_2(p-2)}b_i2^i$ for $b_i\in\{0,1\}$, we have $n\equiv\prod_{0\le
i\le\log_2(p-2)}m_i^{b_i}\pmod{p}$.  Since the $m_i$ are pairwise coprime,
the residue class of $n$ is thus represented by a squarefree $p$-smooth
number, as desired.

It is convenient to choose $m_i$ of the form $q_ir_i$ for primes
$q_i$, $r_i$. To find these efficiently, for each $i=0,1,2,\ldots$ we
search through small primes $q$, compute the smallest positive $r\equiv
q^{-1}g^{2^i}\pmod{p}$, test whether $r$ is prime, and ensure that $qr$
is coprime to $m_j$ for $j<i$. The only essential ingredient needed
to carry this out is a fast primality test; we used a strong Fermat
test to base $2$ coupled with the classification \cite{F} of small
strong pseudoprimes, which would allow us, in principle, to handle
any $p<2^{64}$.  Heuristically, one can expect this method to succeed
using $O(\log^3{p})$ arithmetic operations on numbers of size $p$, and
we found it to be very fast in practice; it takes just minutes to verify
the theorem for all $p\in(10^4,3\times10^8)$ on a modern multicore processor.

For $p<10^4$ we fall back on a brute-force algorithm: For each
integer $a\in[1,p-1]$, consider each of the numbers
$a,a+p,a+2p,\ldots$ until encountering one that divides
$\prod_{\substack{q\text{ prime}\\q<p}}q$.
This takes only seconds to check for all $p<10^4$ other than $5$ and
$7$. (For $p\in\{5,7\}$ one can see directly that $4+p\Z$ is not
represented, but all other residue classes are.)

\section{Generating all of the primes from nothing}\label{sec:nothing}
\label{sec:euclid}
We give two applications of Theorem~\ref{thm:main} to
Euclidean prime generators. The first was described without proof in
\cite[Section 1.1.3]{cp}; we supply the short proof here.
\begin{cor}\label{cor:d+1}
Starting from the emptyset, recursively define a sequence of primes,
where if $n$ is the product of the primes generated so far, take
$$
p=\min\{q\hbox{ prime}:q\nmid n,~q\mid d+1 \hbox{ for some }d\mid n\}
$$
as the next prime.
This sequence begins with $2,3,7,5$, and then produces the
primes in order.
\end{cor}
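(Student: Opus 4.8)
The plan is to handle the initial segment by direct computation and then run a clean induction powered entirely by Theorem~\ref{thm:main}.

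First I would check the first four steps by hand. Starting from the empty product $n=1$, the only divisor is $d=1$ with $d+1=2$, forcing $p=2$. With $n=2$ the divisors $d\in\{1,2\}$ give $d+1\in\{2,3\}$, whose prime factors not dividing $2$ are just $\{3\}$, so $p=3$. With $n=6$ the divisors give $d+1\in\{2,3,4,7\}$, whose prime factors not dividing $6$ are $\{7\}$, so $p=7$. With $n=42$ the divisors give $d+1\in\{2,3,4,7,8,15,22,43\}$, whose prime factors not dividing $42$ are $\{5,11,43\}$, so $p=5$. Hence after four steps the accumulated product is $n=210=2\cdot3\cdot5\cdot7$, which is exactly the product of the first four primes.

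Now I would prove the inductive claim: \emph{if at some stage $n=p_1p_2\cdots p_k$ is the product of the first $k$ primes with $k\ge4$, then the next prime generated is $p_{k+1}$} (after which $n$ becomes $p_1\cdots p_{k+1}$ and the induction continues). Two elementary observations do the work. First, any prime $q$ with $q\nmid n$ is not among $p_1,\dots,p_k$, so $q\ge p_{k+1}$; thus the moment we exhibit $p_{k+1}$ as an eligible prime it is automatically the minimum, and the minimality clause in the definition costs nothing. Second, the divisors of $n$ are precisely the squarefree $p_k$-smooth positive integers, and if $d$ is \emph{any} squarefree $p_{k+1}$-smooth integer with $d\equiv-1\pmod{p_{k+1}}$, then $p_{k+1}\nmid d$, so every prime factor of $d$ is $<p_{k+1}$, i.e.\ lies in $\{p_1,\dots,p_k\}$, whence $d\mid n$. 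Applying Theorem~\ref{thm:main} with $p=p_{k+1}$ — legitimate since $k\ge4$ forces $p_{k+1}\ge11$, so $p_{k+1}\notin\{5,7\}$ — produces such a $d$; by the second observation $d\mid n$, and $p_{k+1}\mid d+1$ with $p_{k+1}\nmid n$, so $p_{k+1}$ is eligible, and by the first observation it is the next prime generated. Starting from the base case $k=4$ this shows the sequence is $2,3,7,5,p_5,p_6,p_7,\dots$, i.e.\ it begins $2,3,7,5$ and thereafter lists the primes in increasing order.

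There is no genuinely hard step here once Theorem~\ref{thm:main} is in hand; the only point needing care is the second observation above — that a squarefree $p_{k+1}$-smooth representative of $-1\bmod p_{k+1}$ is automatically $p_k$-smooth (it cannot be divisible by $p_{k+1}$) and hence a genuine divisor of the current $n$. Without this, the theorem would hand us a $p_{k+1}$-smooth number rather than a divisor of $n$, and the argument would not close. I would also remark that the exceptional primes $5$ and $7$ are harmless precisely because they occur, out of order, in the initial segment $2,3,7,5$ rather than as values of $p_{k+1}$ in the inductive step, where always $p_{k+1}\ge11$.
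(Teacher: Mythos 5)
Your proof is correct and follows essentially the same route as the paper's: after the computed initial segment $2,3,7,5$, apply Theorem~\ref{thm:main} with $a=-1$ modulo the least missing prime $p\ge11$ to get a divisor $d\mid n$ with $p\mid d+1$. You simply make explicit two points the paper leaves implicit — that the squarefree $p$-smooth representative of $-1\bmod p$ cannot be divisible by $p$ and hence divides $n$, and that minimality of the generated prime is automatic since all smaller primes already divide $n$.
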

\begin{proof}
Let $p\ge11$ be the least prime not yet produced after the 4th step
and let $n$ be the product of the primes smaller than $p$.
By Theorem \ref{thm:main}, there exists some $d\mid n$ with
$d\equiv-1\pmod p$.  Thus, we generate $p$ at the next step.
\end{proof}

The second variant was described in \cite{b}. With
Theorem~\ref{thm:main} in hand, we can give a shorter proof.
(As will be clear from the proof, there is an obstruction preventing the
terms from appearing in strict numerical order in this case,
so the conclusion is weaker than that of Corollary~\ref{cor:d+1}.)
\begin{cor}
\label{cor:d+d'}
Starting from the empty set, recursively generate a sequence of
primes where if $n$ is the product of the primes generated so far,
take as the next prime some prime factor of some $d+n/d$, where $d\mid n$.
Such a sequence can be chosen to contain every prime.
\end{cor}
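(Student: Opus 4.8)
The plan is to build the sequence in stages, by a priority argument that reduces everything to the following \emph{Claim}: for every reachable state --- i.e.\ every set $S$ of primes that the rule can produce in finitely many steps, say with product $n$ --- that satisfies $\{2,3,5,7\}\subseteq S$, and for every prime $p>7$, the sequence can be continued from $S$ until $p$ is generated. Granting the Claim, one first notes that the rule forces the sequence to begin $2,3$ (when $n=1$ the only admissible choice is $d=1$, giving $1+n=2$; then $d+n/d=3$ for both divisors of $2$), and then that the continuation $2,3,7,13,5$ is legal: $7\mid 1+2\cdot3$, then $13=6+(2\cdot3\cdot7)/6$, then $5\mid 2+(2\cdot3\cdot7\cdot13)/2=275$. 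After these five steps the state contains $\{2,3,5,7\}$; enumerating the primes exceeding $7$ as $q_1<q_2<\cdots$ and invoking the Claim for $q_1$, then $q_2$, and so on produces an infinite sequence containing every prime. (So the exclusion of $5$ and $7$ in Theorem~\ref{thm:main} is harmless: those two primes, like $2$ and $3$, are generated once and for all at the start.)

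To prove the Claim I would induct on $p$. Fix $S$ with product $n$ as in the Claim, and assume $p\notin S$ (otherwise there is nothing to do). \textbf{Step A.} Invoking the Claim for each prime $q$ with $7<q<p$ in turn --- this is the inductive hypothesis, and the requirement that the current state contain $\{2,3,5,7\}$ is preserved because the rule only ever adjoins primes --- continue the sequence until every prime below $p$ has been generated (if $p$ itself turns up, we are finished). The product $n$ of the resulting state is then divisible by $\prod_{q<p}q$, which, together with Theorem~\ref{thm:main} (applicable since $p>7$), yields the crucial fact that \emph{every nonzero residue class mod $p$ contains a divisor of $n$}. \textbf{Step B.} If $-n$ is a nonzero square mod $p$, say $-n\equiv e^2\pmod p$ for some $e\not\equiv 0$, choose a divisor $d$ of $n$ with $d\equiv e\pmod p$; then $d+n/d\equiv e+(-e^2)e^{-1}\equiv 0\pmod p$, so $p\mid d+n/d$ and we may take $p$ as the next prime.

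\textbf{Step C.} There remains the case that $-n$ is a non-residue mod $p$; then $d+n/d\not\equiv 0\pmod p$ for every divisor $d$ of $n$. The idea is to first adjoin one more prime $\ell$ with $\(\frac{\ell}{p}\)=-1$ (Legendre symbol, which forces $\ell\neq p$, hence $\ell>p$ since also $\ell\notin S\supseteq\{q<p\}$), reducing us to Step~B: then $\(\frac{-n\ell}{p}\)=+1$, while $\prod_{q<p}q$ still divides $n\ell$ and $p$ still does not. To produce such an $\ell$: for any divisor $d$ of $n$, the integers $d$ and $n/d$ are coprime ($n$ being squarefree), so every prime factor of $d+n/d$ is coprime to $n$ and thus lies outside $S$; and if in addition $d+n/d$ is a non-residue mod $p$, then --- its Legendre symbol being the product of those of its prime factors, none of which is $0$ since $p\nmid d+n/d$ --- at least one prime factor $\ell$ has $\(\frac{\ell}{p}\)=-1$. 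So I need a divisor $d$ of $n$ with $d+n/d$ a non-residue mod $p$. As $d$ runs over divisors of $n$, the residue $d\bmod p$ runs over all of $(\Z/p\Z)^*$ (crucial fact from Step~A), and $\(\frac{x+nx^{-1}}{p}\)=\(\frac{x(x^2+n)}{p}\)=\(\frac{x^3+nx}{p}\)$; so it suffices to find $x\in(\Z/p\Z)^*$ with $\(\frac{x^3+nx}{p}\)=-1$. Since $-n$ is a non-residue, $x^3+nx\neq 0$ for $x\neq 0$, so the number of such $x$ equals $\frac12\bigl((p-1)-\sum_{x\bmod p}\(\frac{x^3+nx}{p}\)\bigr)$; and the Hasse bound for the elliptic curve $y^2=x^3+nx$ --- which has good reduction at $p$, as $p\nmid n$ and $p>2$ --- gives $\bigl|\sum_{x\bmod p}\(\frac{x^3+nx}{p}\)\bigr|\le 2\sqrt p$. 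Hence the count is positive for $p\ge 11$, i.e.\ for all $p>7$.

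I expect Step~C to be the crux, along with the repeated need (in both Steps~B and~C) to realize a prescribed residue mod $p$ as an actual \emph{divisor} of $n$ --- which is precisely where Theorem~\ref{thm:main} enters and why all primes below $p$ must be installed first. The extra ingredient is the smallness of $\sum_{x}\(\frac{x^3+nx}{p}\)$; for $p\equiv 3\pmod 4$ this sum is identically $0$ by the substitution $x\mapsto -x$ (as then $\(\frac{-1}{p}\)=-1$), so only $p\equiv 1\pmod 4$ really uses the Hasse bound (equivalently, the classical evaluation of Jacobsthal sums). Finally, one should note in passing the routine facts that the generated primes stay distinct and $n$ stays squarefree (by induction, since a prime dividing $n$ divides exactly one of $d$ and $n/d$), and that each invocation of the Claim appends only finitely many primes.
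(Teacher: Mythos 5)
Your proposal is correct, and its skeleton is the same as the paper's: fix a starting block containing $2,3,5,7$ (you use $2,3,7,13,5$; the paper uses $2,3,5,13,7$), then for the least missing prime $p>7$ split on $\left(\frac{-n}{p}\right)$; in the residue case use Theorem~\ref{thm:main} to realize a square root of $-n$ as a divisor $d$ of $n$, and in the non-residue case first adjoin an auxiliary prime $\ell$ with $\left(\frac{\ell}{p}\right)=-1$ obtained as a prime factor of some $d+n/d$ that is a non-residue mod~$p$. The one genuine difference is that the paper simply cites \cite[Lemma~3(i)]{b} for the existence of $a$ with $\left(\frac{a+n/a}{p}\right)=-1$, whereas you reprove it from scratch by counting $x$ with $\left(\frac{x^3+nx}{p}\right)=-1$ via the Hasse bound for $y^2=x^3+nx$ (or Jacobsthal sums), noting the sum vanishes outright when $p\equiv3\pmod4$; this makes the argument self-contained at the cost of invoking a deeper (though classical) input. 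Your explicit induction/priority scaffolding, the check that the rule forces the start $2,3$, and the observations that new primes are automatically coprime to $n$ (so $n$ stays squarefree and the representative from Theorem~\ref{thm:main} really divides $n$) are all correct bookkeeping that the paper leaves implicit.
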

\begin{proof}
One such sequence begins with $2$, $3$, $5$, $13$, $7$. Suppose
$p>7$ is a prime number and that the sequence constructed so far
contains every prime smaller than $p$ and perhaps some primes larger
than $p$, but it does not contain $p$.  Let $n$ be the product of
the primes generated so far.  
If $\left(\frac{-n}{p}\right)=1$
then there exists $a\in(\Z/p\Z)^*$ such that $a+n/a=0$. By
Theorem~\ref{thm:main} there exists $d\mid n$ belonging to
the class of $a$, so we can choose $p$ as the next prime.  On the other hand,
since $p>5$,
if $\left(\frac{-n}{p}\right)=-1$, then 
\cite[Lemma~3(i)]{b} guarantees the existence of $a\in(\Z/p\Z)^*$
such that $\left(\frac{a+n/a}{p}\right)=-1$.  By
Theorem~\ref{thm:main} there exists $d\mid n$ belonging to the
class of $a$, and by multiplicativity it follows that $d+n/d$
has a prime factor $q$ satisfying $\left(\frac{q}{p}\right)=-1$. Choosing
$q$ as the next prime, we thus have $\left(\frac{-qn}{p}\right)=1$,
so by the above argument we can now take $p$.  This completes the proof.
\end{proof}

\section{Comments and problems}
The prime generator of Corollary \ref{cor:d+1} has its roots in a construction
in the primality test of \cite{APR}. There, a finite initial set of primes
is given with product $I$, and then one takes the product $E$ of all primes
$p\nmid I$ with $p\mid d+1$ for some $d\mid I$.  The primality test can be
used for numbers $n<E^2$ and runs in time $I^{O(1)}$.  Further, it is shown
that there are choices of $I,E$ with $I=(\log E)^{O(\log\log\log E)}$, so
the test runs in ``almost" polynomial time.  The same $I,E$ construction 
(with $I$ no longer required to be squarefree) is
used in the finite fields primality test of Lenstra \cite{Len}.

In Theorem \ref{thm:main} we insist that the squarefree $p$-smooth integers used
be positive.  If negatives are allowed, then the primes 5 and 7 are no longer
exceptional cases.  Further, if ``$d$" is allowed to be negative in the context of
the prime generator in Corollary \ref{cor:d+1}, the primes are generated in order. 
(For this to be nontrivial, $d$ should not be chosen as $-1$.)

Suppose we use the generator of Corollary \ref{cor:d+d'} by always returning the least
prime possible, and say this sequence of primes is $q_1,q_2,\dots$.
Does $\{q_k\}$ contain every prime?  Is there way of choosing the sequence
in Corollary \ref{cor:d+d'} such that every prime is generated and the $k$th 
prime generated is asymptotically equal to the $k$th prime?
Is it true that any sequence containing all primes as in Corollary \ref{cor:d+d'}
cannot contain the primes in order starting from some point?
These questions might all
be asked if we allow prime factors of $d\pm n/d$ in Corollary \ref{cor:d+d'} instead of
just $d+n/d$.

Presumably in Theorem \ref{thm:main}, when $p$ is large, residues $a\pmod p$
have many representations as squarefree $p$-smooth
integers.  Say we try to minimize the largest squarefree $p$-smooth used.
For $p>7$, let  $M(p)$ be the smallest number such that every residue mod~$p$
can be represented by a squarefree $p$-smooth number at most $M(p)$.
Our proof shows that $M(p)\le p^{O(\log p)}$.  We conjecture that $M(p)\le p^{O(1)}$.

We mentioned in the Introduction that the condition ``$p$-smooth" in
Theorem \ref{thm:main} cannot be relaxed to ``$y$-smooth" for any
$y<p$, since otherwise the residue class $0\pmod p$ will not be
represented.  However, we may ask for the smallest number $y=y(p)$
such that every nonzero residue class mod~$p$ can be represented
by a $y$-smooth squarefree number.  Via the Burgess inequality, it is likely that one
can show that $y(p)\le p^{1/(4\sqrt{e})+o(1)}$ as $p\to\infty$.    
Assuming the Generalized Riemann Hypothesis for Kummerian fields
(as Hooley \cite{hooley} did in his GRH-conditional proof of Artin's conjecture), it is
likely that one can prove that $y(p)=O((\log p)^2)$.
We note that if one drops the ``squarefree'' condition then these
statements follow from work of Harman \cite{harman}
unconditionally and Ankeny \cite{ankeny} under GRH; see also the recent
paper \cite{LLS} for a strong, numerically explicit version of the
latter.

\section*{Acknowledgments}
We thank Enrique Trevi\~no and Kannan Soundararajan for some helpful
comments.  
The first-named author was partially supported by EPSRC Grant {\tt EP/K034383/1}.

\end{document}